\newtheorem{thm}{Theorem}
\newtheorem*{prop}{Proposition}
\newtheorem{lem}{Lemma}
\theoremstyle{definition}
\newtheorem{example}{Example}
\newtheorem{defn}{Definition}
\renewcommand{\i}{\iota}
\def\Re{{\rm Re}\,}
\def\Im{{\rm Im}\,}
\begin{document}

\title{On factorization of elements in Pimenov algebras}
\author{Dmitriy Efimov\thanks{e-mail: defimov@dm.komisc.ru}\\ Departmant of Mathematics,\\ Komi Science Centre UrD RAS,\\
Syktyvkar, Russia}
\date{}
\maketitle

\begin{abstract}
We consider the operation of division in Pimenov algebras.
We obtain necessary and sufficient conditions  for prime elements in Pimenov algebras
with a number of generators less than $5$.
We adduce examples of the factorization of elements in these algebras.
\end{abstract}

\section*{Introduction} 
A commutative associative algebra with unit generated by the finite number
of nilpotent of index $2$ generators arises in many areas of mathematics and theoretical physics.
R.I. Pimenov used it for a unified description of all $3^n$ Cayley-Klein geometries of dimension $n$ 
(geometries of spaces with constant curvature) \cite{pim2}.
Recently, its use is mainly due to the method of algebraic contractions \cite{grom} -- \cite{grom2}.
Pimenov algebra with one generator is the set of {\it dual numbers}, which
introduced W. K. Clifford in the second half of the XIX century.
These hypercomplex numbers are widely used in various fields of
mathematics and theoretical physics \cite{diment} -- \cite{sat2}.
In this work our attention will be focused on Pimenov algebras generated 
by more than one generator.  

We shall give a rigorous definition.
Let $K$ denote an arbitrary field of characteristic zero, for example,
the field of real or complex numbers.
\begin{defn}
{\it Pimenov algebra } with $n$ generators over $K$ is
an associative algebra generated over $K$ by unit and elements $\i_k,\ k=1,\dots, n$
with the defining relations
\begin{equation}\label{sootn}
\i_k^2=0,\ \i_k\i_l=\i_l\i_k,\ \  k,l=1,\dots, n.
\end{equation}
We shall denote it by $P_n(\i)$.
\end{defn}
From the definition it follows that the algebra $P_n(\i)$ is commutative,
has a unit, and each its element is  represented uniquely in the following {\itshape standard} form:
\begin{eqnarray}\label{1_1}
p=p_0+\sum_{t=1}^n\sum_{k_1<\dots<k_t} p_{k_1\dots k_t}\i_{k_1}\dots\i_{k_t},
\end{eqnarray}
where $p_0, p_{k_1\dots k_t}\in K$.
By analogy with complex numbers, the element $p_0$ we will call
{\itshape the real part} of the element $p$ and denote by $\Re p$, 
and the element $p-p_0$ we will call {\itshape the imaginary part} of the element $p$ and denote by $\Im p$.
Pimenov algebra with $n$ generators can be considered \cite{grom} as a subalgebra
of the even part of Grassmann algebra with $2n$ generators.
Recall that generators of the latter, in contrast to generators of Pimenov algebra,
are anticommuting \cite{grassmann}.

For integral rings, i.e. commutative associative rings
without zero divisors, one of the important issues is
the problem of the factorization of elements or, in other words,
the problem of their decomposition in the product of prime elements. 
{\it The prime element} is a nonzero not invertible element
that can not be represented in the form of a product 
of not invertible elements. Thus in the ring of integers
prime elements are the prime numbers,
in the ring of polynomials in one variable  they are the irreducible polynomials. 
For this rings it is known that each  nonzero  not invertible element
is represented uniquely up to permutations of factors and up to multiplication by invertible elements
as a product of  prime elements. 

The algebra $P_n(\i)$ is not an integral ring, it has zero divisors. 
Nevertheless the issue  of a factorization its elements 
makes also certain sence. For example, in some studies in theoretical physics using
Pimenov algebra  arise  expressions of the form $\iota_k/\iota_k$ \cite{grom_eng}, \cite{grom2}. 
Despite the fact that a division of the generators of Pimenov algebra is undefined
(as will be discussed in more detail in the next section), such expressions, which is quite natural,
are set equal to one.
From a mathematical point of view this is a special case of the broader issue
about a solution of the equation $ax=b$ with respect to $x$, 
where $a$ and $b$  are  not invertible elements of Pimenov algebra.
This equation has a solution only in the case, when $b$ may be decomposed into factors, one of which is $a$.

In the first section we will consider invertible elements in Pimenov algebra. 
In the second section we will examine direct  factorization of elements. 

\section{Division  in the algebra $P_n(\i)$}
This section we will begin with some definitions.
\begin{defn}
{\itshape The length of a monomial} $\lambda\i_{k_1}\dots\i_{k_t}$ is the number $t$ of generators
of the algebra $P_n(\i)$ included in this monomial. By definition we will assume
that all elemets of the form $\lambda 1$, $\lambda\in K$ have  zero length.
\end{defn}
\begin{defn}
{\itshape The degree} of an element $p\in P_n(\i)$ is the minimum length of monomials
included in the standart form (\ref{1_1}) of the element $p$.
The degree of an element with nonzero real part we will take equal $0$. 
The degree of zero element we shall assume equal $+\infty$.
An element $p\in P_n(\i)$ is called {\it homogeneous of degree $t,\ t\leq n$}
if its standart form  involve only monomial of the length $t$.
\end{defn}
It is quite obvious that for any pair of elements $a,b$ from the algebra $P_n(\i)$ we have 
\begin{equation}\label{1_5}
\deg ab\geq \deg a+\deg b.
\end{equation}
 
An element $p\in P_n(\i)$ is called {\it invertible}, if there exist an element 
$p^{-1}\in P_n(\i)$ such that $pp^{-1}=p^{-1}p=1$. 
The following theorem holds.
\begin{thm}\label{1_4}
An element $p\in P_n(\i)$ is invertible if and only if its real part is zero.
If $p$ is invertible, then $p^{-1}$ is uniquely determined by the following formula:
\begin{equation}\label{obr}
 p^{-1}=\frac{1}{\Re p}\sum_{m=0}^M\left(-\frac{\Im p}{\Re p}\right)^m,
\end{equation}
where $M$ is a maximum degree such that $(\Im p)^M$ is not equal zero.
\end{thm}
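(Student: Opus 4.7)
The plan is to prove the two directions of the equivalence separately and then verify the explicit formula. First I would handle the ``only if'' direction by a short degree argument: if $\Re p = 0$ (which is what is actually needed for non-invertibility, despite the wording of the statement), then $p = \Im p$ has degree at least $1$, and applying the inequality (\ref{1_5}) to any candidate product $pq$ gives $\deg(pq)\geq 1$. Since the unit $1$ has degree $0$, no such $q$ can satisfy $pq=1$, so $p$ is not invertible.

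For the converse, assume $\Re p \neq 0$. The crucial observation is that $\Im p$ is nilpotent: every monomial in its standard form has length between $1$ and $n$, and by the defining relations (\ref{sootn}) any product of more than $n$ generators (with repetitions allowed) must repeat some $\i_k$ and hence vanish. Thus $(\Im p)^{n+1}=0$, which guarantees the existence of the largest integer $M$ with $(\Im p)^M\ne 0$ appearing in the statement. Set $r=\Re p$, $i=\Im p$, and $y=-i/r$, so that $p=r(1-y)$ and $y^{M+1}=0$. Taking $q$ to be the right-hand side of (\ref{obr}), i.e.\ $q = r^{-1}\sum_{m=0}^{M} y^{m}$, the telescoping identity
\[
pq \;=\; (1-y)\sum_{m=0}^{M} y^{m} \;=\; \sum_{m=0}^{M} y^{m} - \sum_{m=0}^{M} y^{m+1} \;=\; 1 - y^{M+1} \;=\; 1
\]
shows that $q$ is an inverse of $p$; commutativity of $P_n(\i)$ then gives $qp=1$ as well.

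Uniqueness of $p^{-1}$ is immediate from associativity and commutativity: if $pq_1=pq_2=1$, then $q_1 = q_1(pq_2) = (q_1 p)q_2 = q_2$, so the inverse, when it exists, is determined by the closed-form expression above.

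The main obstacle, such as it is, lies in the nilpotency step: this is where the specific multiplicative structure of $P_n(\i)$ --- in particular $\i_k^2=0$ and commutativity --- has to be used to bound powers of the imaginary part. Once nilpotency of $\Im p$ is established, the rest reduces to the standard geometric-series identity $(1-y)\sum_{m=0}^{M} y^{m}=1-y^{M+1}$ valid in any commutative ring in which $y^{M+1}=0$, plus the usual inverse-uniqueness argument.
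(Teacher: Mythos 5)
Your proof is correct, and it is genuinely different from what the paper does: the paper does not prove the theorem at all, but simply cites the analogous result for Grassmann algebras (Browne's book) and observes that it transfers to $P_n(\i)$ viewed as a subalgebra of the even part of a Grassmann algebra with $2n$ generators. Your argument is self-contained and uses only facts already established in the paper: the degree inequality (\ref{1_5}) rules out invertibility when $\Re p=0$ (and you are right to flag that the theorem's phrase ``real part is zero'' is a misprint for ``nonzero''); the relations (\ref{sootn}) plus a pigeonhole count give $(\Im p)^{n+1}=0$, which justifies the existence of $M$; and the finite geometric series $(1-y)\sum_{m=0}^{M}y^{m}=1-y^{M+1}=1$ verifies the formula, with uniqueness following from the standard associativity argument. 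What your route buys is independence from the external reference and transparency about exactly which structural features of $P_n(\i)$ are used (commutativity, $\i_k^2=0$, and the grading by monomial length); what the paper's route buys is brevity and a pointer to the more general Grassmann-algebra setting, where the same series formula holds. One cosmetic remark: the nilpotency exponent $n+1$ you obtain is cruder than the bound (\ref{max}) the paper derives immediately after the theorem, but for the purposes of proving the theorem any finite bound suffices.
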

\begin{proof}
A detailed proof of this theorem for  Grassmann algebra is given in \cite{grassmann}. 
It is true respectively  for Pimenov algebra as a subalgebra of Grassmann algebra.
Note that in \cite{witt} the right part of the formula (\ref{obr})
is given in the form of an infinite power series.
\end{proof}
Taking into account the structure of Pimenov algebra
give here a more precise estimate for M compared with \cite{grassmann}.
Consider an arbitrary element $p\in P_n(\i)$. 
Let the number of monomials contained in the standard form of $\Im p$ is equal to $\mu$. 
Then the maximum degree, by raising to which $\Im p$ is not zero,
can be estimated as follows: 
\begin{equation}\label{max}
M\leq{\rm min}\left\{\left[\frac{n}{\deg(\Im p)}\right], \mu\right\},
\end{equation}
where $[\ ]$ denotes the integer part.

\begin{example}
If $p=2+\i_1-\i_2\i_3$, then $M=2$ and 
\begin{equation*}
 \begin{split}
  p^{-1}&=\frac{1}{2}\left(1-\frac{\i_1-\i_2\i_3}{2}+\left(\frac{\i_1-\i_2\i_3}{2}\right)^2\right)=\\
  &=\frac{1}{2}-\frac{1}{4}\i_1+\frac{1}{4}\i_2\i_3-\frac{1}{8}\i_1\i_2\i_3.
 \end{split}
\end{equation*}
\end{example}
From this example it follows that a number of monomials in an element and its inverse may be not coincide.
But it is easy to see that degrees of imaginary parts of $p$ and $p^{-1}$   
are coincide for any invertible element $p\in P_n(\i)$:
\begin{equation}
\deg(\Im p)=\deg(\Im p^{-1}).
\end{equation}

Now let us express in an explicit form some coefficients of monomials in an inverse element
through coefficients of monomials in an element itself.
Assume that we are looking an inverse element $b$ of an element $a$.
Consider the equality:
$$
(a_0+\dots+a_{1\dots n}\i_1\dots\i_n)
(b_0+\dots+b_{1\dots n}\i_1\dots\i_n)=1
$$
Two elements of the algebra $P_n(\i)$ are equel if and only if
coefficients in their standart form $(\ref{1_1})$ before appropriate 
monomials are equal.
Multiplied brackets in the left part of the above equality  
and equeted coefficients of appropriate monomials
$\i_{k_1}\dots\i_{k_t}$ we get a system of $2^n$ linear equations with
$2^n$ unknowns. 
By multiplying of elements from $P_n(\i)$
their real parts are multiplied, i.e. $\Re ab=\Re a\Re b$.
If $a_0$ is not equal $0$ then it is invertible and from the equality $a_0b_0=1$
we get uniquely: 
\begin{equation}
 b_0=\frac{1}{a_0}\ .
\end{equation} 
Further, a generator $\i_k$ will be have the coefficient $c_k=a_0b_k+a_kb_0$ 
in the left part of the equality. 
From the equation $c_k=0$ we obtain uniquely:
\begin{equation}
 b_k=-\frac{a_kb_0}{a_0}=-\frac{a_k}{a^2_0},\ \ \ k=1,2,\dots,n.
\end{equation}
In the next step we find coefficients $b_{k_1k_2}$ with double-digit lower indexes.
It is easy to see that each of them is uniquely expressed as a fraction, 
the denominator of which is $a_0$, and the numerator is  an expression of
the coefficients have already been found:
\begin{equation}
\begin{split}
 b_{k_1k_2}=-&\frac{a_{k_1}b_{k_2}+a_{k_2}b_{k_1}+a_{k_1k_2}b_0}{a_0}=\\
 &=\frac{2a_{k_1}a_{k_2}-a_0a_{k_1k_2}}{a_0^3}.
\end{split}
\end{equation}
Similarly, using the already calculated coefficients, we find coefficients with three lower indices.
\begin{equation}
 \begin{split}
  b_{k_1k_2k_3}=&\frac{2a_0(a_{k_1}a_{k_2k_3}+a_{k_2}a_{k_1k_3}+a_{k_3}a_{k_1k_2})}{a_0^4}\,-\\
  &-\frac{6a_{k_1}a_{k_2}a_{k_3}+a_0^2a_{k_1k_2k_3}}{a_0^4}.
 \end{split}
\end{equation}

By definition, the problem of division in the algebra $P_n(\i)$ is equivalent to
the question about solution of the equation
\begin{eqnarray}\label{1_3}
ax=b,\ \ \ a,b\in P_n(\i).
\end{eqnarray}

\begin{prop}\label{1_6}
If an element $a$ is invertible, then the equation (\ref{1_3})
has the unique solution $x=a^{-1}b$. If $b$ is invertible,
$a$ is not invertible, then (\ref{1_3}) does not have solutions. 
If $a$ and $b$ are not invertible, at the same time  non-zero elements, 
then the solution can exist and not exist.
If a solution exists, then it may be not unique, 
and all solutions are invertible or all solutions are not invertible.
\end{prop}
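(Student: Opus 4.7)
The plan is to dispatch the four assertions using Theorem~\ref{1_4} as the primary tool, together with the fact that real parts multiply, i.e.\ $\Re(ab)=\Re a\cdot \Re b$.

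First, if $a$ is invertible, I multiply $ax=b$ by $a^{-1}$ to obtain $x=a^{-1}b$; commutativity makes the side irrelevant, substitution back shows this is indeed a solution, and the same manipulation proves uniqueness. Second, if $a$ is not invertible while $b$ is, then $\Re a=0$ by Theorem~\ref{1_4}, so any purported solution $x$ would force $\Re b=\Re a\cdot\Re x=0$, contradicting the invertibility of $b$. Hence no solution exists.

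For the third assertion I would give small examples in $P_2(\iota)$. The equation $\iota_1 x=\iota_1\iota_2$ admits $x=\iota_2$ (writing a general $x=x_0+x_1\iota_1+x_2\iota_2+x_{12}\iota_1\iota_2$, the relations $\iota_k^2=0$ reduce $\iota_1 x$ to $x_0\iota_1+x_2\iota_1\iota_2$), whereas $\iota_1 x=\iota_2$ is unsolvable because every monomial in $\iota_1 x$ carries the factor $\iota_1$, so the coefficient of $\iota_2$ on the left is forced to be zero. The same first example also demonstrates non-uniqueness: $x=\iota_2+\iota_1$ and $x=\iota_2+\iota_1\iota_2$ are equally valid solutions. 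To illustrate that the set of solutions can consist entirely of invertible elements, I would adduce $\iota_1 x=\iota_1$, whose solutions are all of the form $1+x_1\iota_1+x_{12}\iota_1\iota_2$ and so have nonzero real part.

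The main step is the dichotomy claim. Suppose $x_1$ and $x_2$ both satisfy $ax=b$; then $a(x_1-x_2)=0$. If $x_1-x_2$ were invertible, multiplying by its inverse would yield $a=0$, contrary to $a\neq 0$. Hence $x_1-x_2$ is non-invertible, so by Theorem~\ref{1_4} we have $\Re(x_1-x_2)=0$, i.e.\ $\Re x_1=\Re x_2$. Consequently all solutions share a common real part, and by Theorem~\ref{1_4} again they are either simultaneously invertible (when that common real part is nonzero) or simultaneously non-invertible (when it vanishes). The only subtle point is ruling out a mixed collection of solutions, and the argument above handles it cleanly by exploiting the absence of zero divisors among invertible elements.
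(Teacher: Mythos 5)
Your proof is correct, and parts one through three coincide with the paper's treatment (same cancellation argument, same use of $\Re(ab)=\Re a\,\Re b$, and equally valid explicit examples — the paper uses $\i_2x=\i_2+\i_1\i_2$ and $\i_1\i_2x=\i_1$, you use $\i_1x=\i_1\i_2$ and $\i_1x=\i_2$). Where you genuinely diverge is the dichotomy claim. The paper argues via degrees: from $\deg ab\geq\deg a+\deg b$ one sees that an invertible solution forces $\deg a=\deg b$ while a non-invertible one forces $\deg b>\deg a$, and since which alternative holds is determined by $a$ and $b$ alone, the solutions cannot be of mixed type. You instead subtract two solutions to get $a(x_1-x_2)=0$, note that an invertible element cannot annihilate the nonzero element $a$, conclude $\Re x_1=\Re x_2$, and invoke the invertibility criterion. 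Both arguments are sound; yours is more elementary and localizes the obstruction in the common real part of all solutions (it also immediately explains why non-uniqueness only perturbs the imaginary part), whereas the paper's degree comparison has the advantage of giving an a priori test — one can read off from $\deg a$ and $\deg b$ alone which of the two alternatives occurs, a fact the paper reuses in its later discussion of solvability of $ax=b$ for non-invertible $a,b$.
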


\begin{proof}
Let $ac=b$ and $ac_1=b$, where $a$ is invertible.
Then it follows, that $c=c_1=a^{-1}b$. 

The real part of the product of two elements is equal to the product of their real parts.
Therefore, if $a$ is not invertible, then the real part of the element $ax$ will be zero
and the equation $ax=b$, where $b$ is invertible, does not have solutions.

Let $a$ and $b$ be not invertible.
If $x$ is a solution of the equation $ax=b$, then
$x+\lambda\i_1\i_2\dots\i_n$, $\lambda\in K$ is also
a solution of this equation. If $x$ is invertible, then
$\deg a=\deg b$. And if $x$ is not invertible, then $\deg b>\deg a$.
It follows, that either all solutions of the equation $ax=b$ are invertible,
or all of them are not invertible. 

The equation $\i_2x=\i_2+\i_1\i_2$ has the solution $x=1+\i_1$. 
The equation $\i_1\i_2x=\i_1$ does not have solutions.  
Indeed, if there exist a solution, 
then by (\ref{1_5}) the inequality $\deg\i_1\geq \deg{\i_1\i_2}+\deg x$ is hold. 
But it is  not true obviously.
\end{proof}

Solvability of Equation (\ref{1_3}) in the case when $a$ and $b$ 
are not invertible is closely linked
with a prime factorization of not invertible elemets of Pimenov algebra. 
This problem will be discussed in the next section.

\section{Factorization}
\begin{defn}
By analogy with the integral rings,
a nonzero invertible element, which can not
be represented as a product of non-invertible 
elements in the algebra $P_n(\i)$
is said to be {\it a prime element}.
\end {defn}
Using inequality (\ref{1_5}) we can easily show 
that each nonzero not-invertible element in the algebra $P_n(\i)$ 
is a product of a finite number of prime elements.
Such product can not be unique.
For example, $(\i_1+\i_2)\cdot\i_2=\i_1\cdot\i_2$  are two different ways
of factorization of the element $\i_1\i_2$.

It follows from (\ref{1_5}), that 
all elements of the first degree 
are prime elements in $P_n(\i)$.  
Consider the question, does $P_n(\i)$ have 
prime elements other than elements of the first degree?
\begin{thm}\label{t2}
In Pimenov algebras $P_2(\i)$ and $P_3(\i)$ prime elements
are only elements of the first degree.
\end{thm}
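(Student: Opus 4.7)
The plan is to show that every non-invertible element of degree $\geq 2$ in $P_2(\i)$ and $P_3(\i)$ factors as a product of two non-invertible (hence degree $\geq 1$) elements. Since the length of any nonzero monomial is bounded by $n$, only finitely many degrees need to be treated.

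For $P_2(\i)$, the only non-invertible element of degree $\geq 2$ has the form $\lambda\i_1\i_2$ ($\lambda\ne 0$), which factors as $\i_1\cdot(\lambda\i_2)$; both factors have degree $1$ and are non-invertible by Theorem~\ref{1_4}. In $P_3(\i)$, a degree-$3$ element is $\lambda\i_1\i_2\i_3$, which factors as $\i_1\cdot(\lambda\i_2\i_3)$. So the substantive case is a degree-$2$ element
$$p = a\i_1\i_2 + b\i_1\i_3 + c\i_2\i_3 + d\i_1\i_2\i_3,\qquad (a,b,c)\ne(0,0,0).$$

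I would seek a factorization $p=uv$ with $u=\alpha_1\i_1+\alpha_2\i_2+\alpha_3\i_3$ homogeneous of degree $1$, and $v=\beta_1\i_1+\beta_2\i_2+\beta_3\i_3+\beta_{12}\i_1\i_2+\beta_{13}\i_1\i_3+\beta_{23}\i_2\i_3$. Since monomials of length $\geq 4$ vanish in $P_3(\i)$, expanding $uv$ and matching coefficients in standard form yields exactly four scalar equations:
$$\alpha_1\beta_2+\alpha_2\beta_1=a,\quad \alpha_1\beta_3+\alpha_3\beta_1=b,\quad \alpha_2\beta_3+\alpha_3\beta_2=c,$$
$$\alpha_1\beta_{23}+\alpha_2\beta_{13}+\alpha_3\beta_{12}=d.$$
The task reduces to showing these always admit a solution for which $u$ and $v$ each have degree exactly $1$, so that both are non-invertible.

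The main (and only real) step is the first three bilinear equations. I would simply take $\alpha=(1,1,1)$; then the system is linear in $\beta_1,\beta_2,\beta_3$ and has the unique solution
$$\beta_1=\tfrac{a+b-c}{2},\ \beta_2=\tfrac{a-b+c}{2},\ \beta_3=\tfrac{-a+b+c}{2},$$
which is verified by direct substitution. The potential obstacle is that $\beta$ might vanish, but adding any two of these expressions shows $\beta=0$ forces $a=b=c=0$, contradicting $\deg p=2$; hence $v$ genuinely has degree $1$. The last equation is then linear in $\beta_{12},\beta_{13},\beta_{23}$ with nonzero coefficients (since $\alpha=(1,1,1)$) and is solved trivially, e.g.\ $\beta_{23}=d$, $\beta_{12}=\beta_{13}=0$. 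Thus $p=uv$ with both factors non-invertible, proving $p$ is not prime and completing the theorem.
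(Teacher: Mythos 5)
Your proof is correct and follows essentially the same route as the paper: both exhibit an explicit factorization of a degree-$2$ element of $P_3(\i)$ with one factor homogeneous of degree $1$, found by solving the bilinear coefficient system, and both absorb the $\i_1\i_2\i_3$ term into the second factor. Your specific choice $\alpha=(1,1,1)$ is a clean special case of the paper's parametrized solution (the paper sets $a_1=1$ and leaves $a_3,b_1$ free, producing formulas with denominator $\beta-2ab$), and it neatly sidesteps any nonvanishing-denominator concerns while also covering the $\alpha=\beta=\gamma=0$ monomial case explicitly.
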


\begin{proof}
If an element in $P_2(\i)$ is not-invertible and its degree is different from $1$, then
it is a monomial $p=\lambda \i_1\i_2$, which, obviously, is not prime.

If an element in $P_3(\i)$ is not-invertible and its degree is more than $1$, then it has the form:
\begin{equation}\label{p23}
 p=\alpha\i_1\i_2+\beta\i_1\i_3+\gamma\i_2\i_3+\delta\i_1\i_2\i_3,\ \ \ \alpha,\beta,\gamma,\delta\in K.
\end{equation}
Let us first consider its homogeneous part of the second degree, that is the element
\begin{equation*}\label{p24}
 f=\alpha\i_1\i_2+\beta\i_1\i_3+\gamma\i_2\i_3,\ \ \ \alpha,\beta,\gamma\in K.
\end{equation*}
Obviously, if $f$ is decomposible, it can be written as a product of
homogeneous elements of the first degree:
\begin{equation}\label{p25}
 f=(a_1\i_1+a_2\i_2+a_3\i_3)(b_1\i_1+b_2\i_2+b_3\i_3).
\end{equation} 
One of the coefficients, say $a_1$, we can choose to be $1$.
Expanding in (\ref{p25}) brackets, and equating coefficients, we obtain the system:
\begin{equation*}
\left\{
\begin{aligned}
b_2+a_2b_1=\alpha,\\
b_3+a_3b_1=\beta,\\
a_2b_3+a_3b_2=\gamma.
\end{aligned}
\right.
\end{equation*}
Taking $a_3 = a$ and $b_1 = b$ for the parameters, and expressing through them  the other coefficients, we get:
\begin{equation*}
a_2=\frac{\gamma-a\alpha}{\beta-2ab},\ \ b_2=\frac{\alpha\beta-b\gamma-ab\alpha}{\beta-2ab},\ \ b_3=\beta-ab.
\end{equation*}
Thus, the element $f$ is decomposible.
From this decomposition is easy to get and a decomposition of $p$ adding, for example,
to the second factor the term $\delta\i_2\i_3$.
So any element in $P_3(\i)$, the degree of which more than $1$, is not prime.
\end{proof}

\begin{example}
If $\beta\not=0$, then assuming $a=b=0$ we get:
 \begin{equation*}\label{p22}
   \begin{split}
     \alpha\i_1\i_2+\beta\i_1\i_3+\gamma\i_2\i_3&+\delta\i_1\i_2\i_3=\\
     &=(\i_1+\frac{\gamma}{\beta}\i_2)(\alpha\i_2+\beta\i_3+\delta\i_2\i_3).
   \end{split} 
 \end{equation*}
\end{example}

The first example of Pimenov algebra, in which there exist nontrivial prime elemets, is the algebra $P_4(\i)$.
\begin{lem}\label{pre0}
Only elements of the following two types are prime among homogeneous elements of $2$-degree in  $P_4(\i)$:
\begin{enumerate}
\item $\alpha\i_a\i_b+\beta\i_a\i_c+\gamma\i_b\i_d,\ \ \alpha, \beta, \gamma\not=0$;  
\item $\alpha\i_a\i_b+\beta\i_a\i_c+\gamma\i_b\i_d+\delta\i_c\i_d,\ \ \alpha\beta\gamma\delta<0$;
\end{enumerate}
where all indexes $a,b,c,d$ are distinct and take value from  $1$ to $4$.
\end{lem}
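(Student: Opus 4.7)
First I reduce the problem to producing linear factorizations. If $f$ is a non-prime homogeneous element of degree $2$, then $f = gh$ with $g,h$ non-invertible; inequality~(\ref{1_5}) together with the homogeneity of $f$ forces $\deg g = \deg h = 1$, and picking out the degree-$2$ component of $gh$ shows $f = g_1 h_1$, where $g_1,h_1$ denote the degree-$1$ homogeneous parts of $g,h$. Writing $g_1 = \sum_i a_i\i_i$, $h_1 = \sum_i b_i\i_i$, and $f = \sum_{i<j} c_{ij}\i_i\i_j$, the question reduces to the solvability of the bilinear system
\[
c_{ij} = a_i b_j + a_j b_i, \qquad 1 \le i < j \le 4,
\]
with $g_1, h_1$ both nonzero.

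Next I classify by the graph $G$ on $\{1,2,3,4\}$ whose edges are the pairs with $c_{ij}\ne 0$. If $G$ omits a vertex then $f$ actually lies in a copy of $P_3(\i)$ and Theorem~\ref{t2} delivers a factorization; otherwise $G$ is, up to relabelling, one of seven graphs: two disjoint edges; the star $K_{1,3}$; a Hamiltonian path on three edges; the $4$-cycle $K_{2,2}$; the ``paw'' (triangle with a pendant edge); $K_4$ minus an edge; or $K_4$ itself. The star is trivially the product $\i_a(\beta\i_b+\gamma\i_c+\delta\i_d)$. For each of the remaining non-prime candidates I produce an explicit factorization: for the $5$- and $6$-edge graphs, setting $a_1=1$, $b_1=0$ determines $b_2,b_3,b_4$ from $c_{12},c_{13},c_{14}$, and the remaining equations for $c_{23},c_{24},c_{34}$ form a linear system in $a_2,a_3,a_4$ with determinant $-2c_{12}c_{13}c_{14}\ne 0$, which is uniquely solvable; the paw and the disjoint matching are handled by analogous direct constructions (for example, $\alpha\i_1\i_2 + \beta\i_3\i_4 = (\i_1+\i_2+\i_3-\tfrac{\beta}{\alpha}\i_4)\bigl(\tfrac{\alpha}{2}\i_1+\tfrac{\alpha}{2}\i_2-\tfrac{\alpha}{2}\i_3+\tfrac{\beta}{2}\i_4\bigr)$). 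Only the Hamiltonian path and the $4$-cycle remain as candidates.

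For the Hamiltonian path I may take $c_{12}=\alpha$, $c_{13}=\beta$, $c_{24}=\gamma$ all nonzero and the other three $c_{ij}=0$. From $c_{13}\ne 0$ together with $c_{23}=c_{34}=0$ one deduces $a_3\ne 0$, after which $b_2 = -a_2 b_3/a_3$ and $b_4 = -a_4 b_3/a_3$; the equations $c_{12}=\alpha$ and $c_{14}=0$ then read $a_2Q = \alpha$ and $a_4Q = 0$, where $Q = b_1 - a_1 b_3/a_3$. These together contradict the remaining equation $c_{24} = -2a_2a_4b_3/a_3 = \gamma \ne 0$ (if $Q\ne 0$ then $a_4=0$, whence $c_{24}=0$; if $Q=0$ then $\alpha=0$). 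Hence no linear factorization exists and type~(1) is always prime.

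For the $4$-cycle I normalize $a_1=1$, read off $b_2,b_3,b_4$ from $c_{12}=\alpha,c_{13}=\beta,c_{14}=0$, and substitute into the remaining equations; using $c_{23}=0$ to express $b_1$ in terms of $r = a_2/a_3$ reduces $c_{24}=\gamma$ and $c_{34}=\delta$ to the single relation $r^2 = \alpha\gamma/(\beta\delta)$. Over $K = \mathbb{R}$ this admits a solution precisely when $\alpha\beta\gamma\delta \ge 0$; the degenerate sub-cases $a_2=0$, $a_3=0$, or $b_1=0$ each force the special identity $\beta\gamma = \alpha\delta$, which lies inside that region. Hence the $4$-cycle element is prime exactly when $\alpha\beta\gamma\delta<0$, yielding type~(2). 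The main subtlety of the proof is the bookkeeping of these degenerate branches in the final elimination, since one must ensure that no alternative choice of normalization lets us evade the real-solvability condition on $r^2$.
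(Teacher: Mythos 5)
Your proof is correct and follows essentially the same route as the paper's: reduce to a bilinear system for two homogeneous degree-$1$ factors, then case-split on which monomials occur, showing everything with a generator in three monomials (your determinant $-2c_{12}c_{13}c_{14}$ is exactly the paper's explicit degree-$3$-vertex factorization), the matching, and the sub-$P_3$ cases are decomposable, while the path is always obstructed and the $4$-cycle reduces to a square condition equivalent to $\alpha\beta\gamma\delta>0$. Your organization by support graphs is a tidy repackaging of the paper's count-of-monomials case analysis, but the substantive computations coincide.
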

\begin{proof}
A homogeneous element of $2$-degree in $P_4(\i)$ has the following general form:
\begin{equation}\label{p28}
p=\alpha\i_1\i_2+\beta\i_1\i_3+\gamma\i_1\i_4+\delta\i_2\i_3+\rho\i_2\i_4+\sigma\i_3\i_4.
\end{equation}
Obviously, that if $p$ is decomposable, then it can be written
as a product of homogeneous elements of $1$-degree:
\begin{equation}\label{p8}
p=\left(\sum_{s=1}^4a_s\i_s\right)\left(\sum_{t=1}^4b_t\i_t\right).
\end{equation} 
Expanding the brackets in this expression and equating coefficients, we obtain the system:
\begin{equation}\label{p9}
\left\{
\begin{aligned}
a_1b_2+a_2b_1=\alpha,\\
a_1b_3+a_3b_1=\beta,\\
a_1b_4+a_4b_1=\gamma,\\
a_2b_3+a_3b_2=\delta,\\
a_2b_4+a_4b_2=\rho,\\
a_3b_4+a_4b_3=\sigma.
\end{aligned}
\right.
\end{equation}

Homogeneous elements of $2$-degree consisting of two monomials can be divided into two classes.
The first class is the set of elements, in which both monomials contain one and the same generator.
Such elements are obviously decomposible.

The second class is the set of elements, in which each generator present
exactly in one monomial, i.e. elements of the form:
$$
p=\alpha\i_1\i_2+\sigma\i_3\i_4.
$$
Obviously, if an element is repsesented in the form (\ref{p8}), then one of the coefficients, say $a_1$,
can be taken equal to $1$. Then, taking the coefficients $a_3=a$ and $b_1=b$ for the parameters,
we find from $(\ref{p9})$ for the element $p$:
\begin{equation*}
a_2=\frac{\alpha}{2b},\ \ a_4=\frac{\sigma}{-2ab},\ \ b_2=\frac{\alpha}{2},\ \ b_3=-ab,\ \ b_4=\frac{\sigma}{2a}.
\end{equation*}
Thus the element
$p=\alpha\i_1\i_2+\sigma\i_3\i_4$ is decomposible.

Homogeneous elements of $2$-degree consisting of three monomials can be divided into three classes.
The first class is the set of elements all monomials which contain one and the same generator.
These elements are clearly decomposable.

The second class is the set of elements that have one and the same generator exactly in two monomials,
i.e. elements of the form:
\begin{equation*}\label{p11}
p=\alpha\i_1\i_2+\beta\i_1\i_3+\delta\i_2\i_3. 
\end{equation*}
By Theorem \ref{t2}, they are also decomposable.

The third class is the set of elements in which two generators appear twice,
and two at once, i.e. elements of the form
\begin{equation}\label{p14}
p=\alpha\i_1\i_2+\beta\i_1\i_3+\rho\i_2\i_4. 
\end{equation}
Under the assumption that $a_3=1$, the system (\ref{p9}) will take for them to form:
\begin{equation}\label{p15}
\left\{
\begin{aligned}
a_1b_2+a_2b_1=\alpha,\\
a_1b_3+b_1=\beta,\\
a_1b_4+a_4b_1=0,\\
a_2b_3+b_2=0,\\
a_2b_4+a_4b_2=\rho,\\
b_4+a_4b_3=0.
\end{aligned}
\right.
\end{equation}
From $2$, $4$ and $6$ equations we find:
\begin{equation}\label{p16}
 b_1=\beta-a_1b_3,\ \ b_2=-a_2b_3,\ \ b_4=-a_4b_3.
\end{equation} 
Substituting these expressions in the $5$-th equation, we get $-2a_2a_4b_3=\rho$. 
Since $\rho\not=0$, then it follows that $a_4\not=0$.
Substituting (\ref{p16}) in the equation $3$, we get $a_4(\beta-2a_1b_3)=0$.
Since $a_4\not=0$, then $\beta-2a_1b_3=0$. 
Substituting (\ref{p16}) in the  equation $1$, we obtain $a_2(\beta-2a_1b_3)=\alpha$. 
It follows that $\alpha=0$. We obtain a contradiction. 
Consequently, the system (\ref{p15}) is not consistent
and the element (\ref{p14}) is prime.

Now consider homogeneous elements of $2$-degree, consisting of four or more monomials.
Note that if in these elements any generator contains exactly in three monomials, these elements are not prime. 
For example, if in the representation (\ref{p28}) coefficients $\alpha$, $\beta$ and $\gamma$
of the monomials, which include $\i_1$, are not equal to zero, then the following equality hold:
\begin{equation*}
 \begin{split}
  p=(\i_1+\frac{\gamma\delta-\alpha\rho+\sigma\beta}{2\beta\gamma}\i_2+
  &\frac{\gamma\delta+\alpha\rho-\sigma\beta}{2\alpha\gamma}\i_3+\\
  +\frac{-\gamma\delta+\alpha\rho+\sigma\beta}{2\alpha\beta}\i_4)
  &(\alpha\i_2+\beta\i_3+\gamma\i_4).
 \end{split}
\end{equation*}
Similar formulas are valid in the case when monomials, 
which include generators $\i_2$, $\i_3$, $\i_4$ have non-zero coefficients. 
To this category belong automatically  homogeneous elements $2$-degree with $5$ and $6$ monomials.

Now consider homogeneous elements of $2$-degree consisting of four monomials
such that each generator is present exactly in two monomials:
\begin{equation}\label{p17}
p=\alpha\i_1\i_2+\beta\i_1\i_3+\rho\i_2\i_4+\sigma\i_3\i_4. 
\end{equation}
Under the assumption that $a_1=1$, the system (\ref{p9}) can be rewritten for them in the form:
\begin{equation}\label{p18}
\left\{
\begin{aligned}
b_2+a_2b_1=\alpha,\\
b_3+a_3b_1=\beta,\\
b_4+a_4b_1=0,\\
a_2b_3+a_3b_2=0,\\
a_2b_4+a_4b_2=\rho,\\
a_3b_4+a_4b_3=\sigma.
\end{aligned}
\right.
\end{equation}
Taking coefficients $a_3=a$ and $b_1=b$ for the parameters,
from the system $(\ref{p18})$ we find that $\beta-2ab\not=0$ and
\begin{gather*}
 a_2=\frac{-a\alpha}{\beta-2ab},\ \ a_4=\frac{\sigma}{\beta-2ab},\ \ b_2=\frac{\alpha(\beta-ab)}{\beta-2ab},\\ 
 b_3=\beta-ab,\ \ b_4=\frac{-b\sigma}{\beta-2ab}.
\end{gather*}
Substituting these values into the fifth equation of the system (\ref{p18}), we obtain the equation
\begin{equation}\label{p19}
\frac{\alpha\beta\sigma}{(\beta-2ab)^2}=\rho.
\end{equation}
This shows that for the consistency of the system (\ref{p18}) requires 
that the value $\alpha\beta\sigma$ and $\rho$
have the same sign, which is equivalent to the inequality
\begin{equation}\label{p20}
\alpha\beta\rho\sigma>0.
\end{equation}
If the condition (\ref{p20}) does not hold, then the system (\ref{p18}) is not consistent and
the element (\ref{p17}) is prime. 
If the condition (\ref{p20}) holds, then $\frac{\alpha\beta\sigma}{\rho}>0$
and from (\ref{p19}) we get another condition for the parameters $a$ and $b$:
\begin{equation}\label{p21}
\beta-2ab=\pm\sqrt{\frac{\alpha\beta\sigma}{\rho}}.
\end{equation}
Picking $a$ and $b$ that  satisfies (\ref{p21}), 
we get a decomposition of the element (\ref{p17}) 
into  nontrivial prime factors. 
Thus lemma is completely proved.
\end{proof}

Consider in $P_4(\i)$ an arbitrary element of the form: 
\begin{equation}\label{p1}
u=a_1\i_1+a_2\i_2+a_3\i_3+a_4\i_4,\ \ \ a_i\in K,
\end{equation}
for which among the coefficients $a_i$ are three non-zero.
\begin{lem}\label{pre1}
For any homogeneous third degree element  
\begin{equation*}\label{p2}
 w=\alpha\i_1\i_2\i_3+\beta\i_1\i_2\i_4+\gamma\i_1\i_3\i_4+\delta\i_2\i_3\i_4,
 \end{equation*}
$\alpha, \beta, \gamma, \delta\in K$, there exist a homogeneous second degree element
\begin{equation*}\label{p3}
 v=b_1\i_1\i_2+b_2\i_1\i_3+b_3\i_1\i_4+b_4\i_2\i_3+b_5\i_2\i_4+b_6\i_3\i_4, 
\end{equation*}
$b_j\in K$, such that
\begin{equation}\label{p4}
w=uv.
\end{equation}
\end{lem}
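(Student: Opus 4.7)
The plan is to expand $uv$ directly, match coefficients against $w$ to obtain a linear system for the unknown coefficients of $v$, and then exhibit an explicit solution by exploiting the fact that the system has more unknowns than equations.

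Write
$$v = b_{12}\i_1\i_2 + b_{13}\i_1\i_3 + b_{14}\i_1\i_4 + b_{23}\i_2\i_3 + b_{24}\i_2\i_4 + b_{34}\i_3\i_4.$$
Using $\i_k^2 = 0$ and commutativity, every basis monomial $\i_i\i_j\i_k$ of degree $3$ in $uv$ receives contributions from exactly three products (one for each choice of which of the three indices is supplied by $u$). Equating with the coefficients of $w$ yields the $4\times 6$ linear system
\begin{align*}
 a_1 b_{23} + a_2 b_{13} + a_3 b_{12} &= \alpha,\\
 a_1 b_{24} + a_2 b_{14} + a_4 b_{12} &= \beta,\\
 a_1 b_{34} + a_3 b_{14} + a_4 b_{13} &= \gamma,\\
 a_2 b_{34} + a_3 b_{24} + a_4 b_{23} &= \delta.
\end{align*}
Existence of $v$ is thus equivalent to this system being consistent for every right-hand side.

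Since at least three of $a_1,a_2,a_3,a_4$ are nonzero, by permuting the generators $\i_1,\dots,\i_4$ I may assume without loss of generality that $a_1, a_2, a_3 \neq 0$ (the value of $a_4$ being irrelevant). To exploit the two extra degrees of freedom, I set $b_{12} = b_{13} = 0$. The first equation immediately gives $b_{23} = \alpha/a_1$. The second and third equations become $b_{24} = (\beta - a_2 b_{14})/a_1$ and $b_{34} = (\gamma - a_3 b_{14})/a_1$, expressing two unknowns in terms of $b_{14}$. Substituting into the fourth equation and simplifying produces a single linear equation in $b_{14}$ whose leading coefficient is $-2 a_2 a_3 \neq 0$, yielding the explicit value
$$b_{14} = \frac{a_2\gamma + a_3\beta + a_4\alpha - a_1\delta}{2 a_2 a_3}.$$
From this all six coefficients of $v$ are determined, so $v$ exists and $w = uv$.

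There is no serious obstacle here: the proof is essentially a linear-algebra computation. The only thing one must verify is that the $4\times 4$ minor on the columns corresponding to $(b_{14}, b_{23}, b_{24}, b_{34})$ is nondegenerate, and a direct expansion gives its determinant as $\pm 2 a_1^2 a_2 a_3$, which is nonzero exactly under the standing assumption that $a_1, a_2, a_3 \neq 0$. Crucially, no constraint on $a_4$ is needed, which is why the single hypothesis ``at least three nonzero coefficients'' suffices.
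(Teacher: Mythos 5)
Your proof is correct and follows essentially the same route as the paper: both set up the identical $4\times 6$ linear system and establish consistency by exhibiting a nonsingular $4\times 4$ minor whose determinant is a nonzero multiple of $a_1a_2a_3$ (the paper uses the columns for $b_{12},b_{14},b_{24},b_{34}$ with determinant $-2a_1a_2a_3^2$, you use $b_{14},b_{23},b_{24},b_{34}$ with determinant $2a_1^2a_2a_3$, and additionally write out the solution explicitly). No gaps.
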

\begin{proof}
Assume for definiteness that $a_1a_2a_3\not=0$.
Equating coefficients in Equality (\ref{p4}), we get the system:
\begin{equation}\label{p5}
\left\{
\begin{aligned}
a_1b_4+a_2b_2+a_3b_1=\alpha,\\
a_1b_5+a_2b_3+a_4b_1=\beta,\\
a_1b_6+a_3b_3+a_4b_2=\gamma,\\
a_2b_6+a_3b_5+a_4b_4=\delta.
\end{aligned}
\right.
\end{equation}
With respect to variables $b_1,\dots,b_6$ the matrix of this systems has the form:
\begin{equation}\label{p6}
A=\begin{pmatrix}
  a_3&a_2&0&a_1&0&0\\
  a_4&0&a_2&0&a_1&0\\
  0&a_4&a_3&0&0&a_1\\
  0&0&0&a_4&a_3&a_2
\end{pmatrix}.
\end{equation}
Its rank is equal to $4$, since the determinant of the matrix, 
composed of the first, third, fifth and sixth  
columns of the matrix $A$, is not $0$:
\begin{equation*}
\begin{vmatrix}
  a_3&0&0&0\\
  a_4&a_2&a_1&0\\
  0&a_3&0&a_1\\
  0&0&a_3&a_2
\end{vmatrix}
=-2a_1a_2a_3^2.
\end{equation*}
Thus, the system (\ref{p5}) is consistent.

Similarly, we can prove this proposition in the case when 
other three coefficients $a_i$ are not zero.
\end{proof}

Consider the general case.
\begin{thm}\label{t3}
Only the following elements are prime in  $P_4(\i)$:
\begin{enumerate}
\item elements of $1$-degree;
\item elements that can be written as $p=q+r$ after reduction to the standard form,
where $r$ --- an element of degree greater than $2$, and $q$ ---  an element of one of the following:
\begin{enumerate}
\item $\alpha\i_a\i_b+\beta\i_a\i_c+\gamma\i_b\i_d,\ \ \alpha, \beta, \gamma\not=0$;  
\item $\alpha\i_a\i_b+\beta\i_a\i_c+\gamma\i_b\i_d+\delta\i_c\i_d,\ \ \alpha\beta\gamma\delta<0$;
\end{enumerate}
where all the indexes $a,b,c,d$ are distinct and take value from  $1$ to $4$.
\end{enumerate}
\end{thm}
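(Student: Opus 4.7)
The plan is to argue by cases on $\deg p$, writing $p=q+r$ where $q$ is the homogeneous component of $p$ of minimal degree and $r$ collects the remaining terms. The boundary cases are short. If $\deg p=1$ then $(\ref{1_5})$ immediately precludes any factorization into two non-invertibles, so $p$ is prime. If $\deg p=4$ then $p=\lambda\i_1\i_2\i_3\i_4=\lambda\i_1\cdot(\i_2\i_3\i_4)$ is visibly not prime. If $\deg p=3$, pick any $u$ of degree $1$ with three nonzero coefficients; Lemma \ref{pre1} gives a degree $2$ element $v$ with $q=uv$, and since some coordinate of $u$ is nonzero we can also pick a degree $3$ element $s$ with $us=r$ (where $r$ is a scalar multiple of $\i_1\i_2\i_3\i_4$), whence $p=u(v+s)$ is not prime.

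The essential case is $\deg p=2$, so $p=q+r$ with $q$ homogeneous of degree $2$ and $\deg r>2$. The key structural observation is that if $p=xy$ with $x,y$ non-invertible, then $(\ref{1_5})$ together with non-invertibility forces $\deg x=\deg y=1$; writing $x_1,y_1$ for the linear parts we get $q=x_1y_1$, so any such factorization induces a decomposition of $q$ into two homogeneous linear factors. The primality direction is then immediate: if $q$ is of type (a) or (b), Lemma \ref{pre0} rules out any homogeneous linear factorization of $q$, so $p$ has no non-invertible factorization and must be prime.

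For the converse, assume $q$ is homogeneously decomposable and write $q=u_1v_1$. The plan is: choose a decomposition in which $u_1$ has at least three nonzero coefficients (details below), split $r=r_3+r_4$ by homogeneous degree, invoke Lemma \ref{pre1} to produce $\tilde v_2$ of degree $2$ with $u_1\tilde v_2=r_3$, and choose $\tilde v_3$ of degree $3$ supported on the three generators complementary to any nonzero coordinate of $u_1$ so that $u_1\tilde v_3=r_4$. Then $p=u_1(v_1+\tilde v_2+\tilde v_3)$ exhibits $p$ as a product of non-invertibles. The main obstacle is the parenthetical: one must verify that every homogeneously decomposable $q\in P_4(\i)$ admits a decomposition $u_1v_1$ in which at least one factor has three-coordinate support. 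This is immediate in most sub-cases of Lemma \ref{pre0}'s proof, for instance in $\alpha\i_a\i_b+\beta\i_a\i_c+\gamma\i_a\i_d=\i_a(\alpha\i_b+\beta\i_c+\gamma\i_d)$ one uses the second factor; but the monomial case $q=\lambda\i_a\i_b$ requires the auxiliary identity $\lambda\i_a\i_b=\tfrac{\lambda}{2}(\i_a+\i_b+\i_c)(\i_a+\i_b-\i_c)$ for any third index $c$, and the two-disjoint-monomial case $\alpha\i_a\i_b+\sigma\i_c\i_d$ is handled by choosing the parameters in the Lemma \ref{pre0} construction so that all four coordinates of $u_1$ are nonzero.
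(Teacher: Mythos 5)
Your proof follows essentially the same route as the paper's: reduce decomposability of a degree-$2$ element $p$ to decomposability of its lowest homogeneous part via (\ref{1_5}), apply Lemma \ref{pre0} for the primality direction, and use Lemma \ref{pre1} together with a linear factor having three nonzero coordinates to absorb the higher-degree terms --- indeed the paper merely asserts that three-coordinate claim with ``it is easy to see,'' so your explicit verification is a welcome addition rather than a divergence. One sub-case is missing from your enumeration of the non-immediate cases, namely two monomials sharing a generator, $\alpha\i_a\i_b+\beta\i_a\i_c$, whose obvious factorization $\i_a(\alpha\i_b+\beta\i_c)$ has no three-coordinate factor; it yields to the same device as your monomial case, e.g. $\alpha\i_a\i_b+\beta\i_a\i_c=(\i_a+\alpha\i_b-\beta\i_c)(\alpha\i_b+\beta\i_c)$.
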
 
\begin{proof}
From the degree definition  and the property (\ref{1_5}) it follows 
that all elements of first degree are prime.
Let $p\in P_4(\i)$ be an arbitrary element of degree greater than $1$.
It can be uniquely written in the form:
\begin{equation}\label{p7}
p=q+r+\theta\i_1\i_2\i_3\i_4,
\end{equation}
where $q$ is a homogeneous element of degree two, 
and $r$ is a homogeneous element of degree three.
Obviously, if $p$ is decomposable, then $q$ is  decomposable or zero.
From this and from Lemma \ref{pre0} it follows that 
indicated in the second item of the theorem elements are prime.

Let us show that all  other  not invertible elements are decomposable.
Let $q\not=0$ and $q=ts$ be 
its decomposition into prime homogeneous factors of first-degree.
Analyzing Lemmas \ref{pre0} and \ref{pre1}, it is easy to see that one can always choose
one of the factors $t$ or $s$ in the form (\ref{p1}).
Let this be $t$. 
Then, by Lemma \ref{pre1}, there is a homogeneous element $z$ of two-degree  such that $tz=r$.
Suppose now that  an element $\alpha\i_1$ is one of the summands in $t$.
Then it is easy to see that the product
\begin{equation}\label{f29}
p=t(s+z+\frac{\theta}{\alpha}\i_2\i_3\i_4)
\end{equation}
is one of the decompositions into prime factors of $1$-degree of the element $p$.
Similarly, if other generators are summands in $t$.
If $q=0$, then, again,  by Lemma \ref{pre1} the element $p$ can be written  in the form (\ref{f29})
only without the term $s$ in the second factor.
\end{proof}

Let us return once more to the solution of  Equation (\ref{1_3}) 
in the case, when $a$ and $b$ are  not invertible elements.
If $\deg a=\deg b$, then this equation has a solution only in the case, 
when $a$ and $b$ differ by an invertible factor,
for example, $a=\iota_1$, $b=\iota_1+\iota_1\iota_2=a(1+\iota_2)$.
If $\deg b>\deg a$, then this equation has a solution only 
in the case, when $b$ is not prime, and $a$ is one of its factors.
For example, if $a$ is an element of first degree, and $b$ is 
one of elements of second degree specified in  Theorem \ref{t3}, then
(\ref{1_3}) does not have a solution.

\section*{Conclusion}
In this work we have considered prime elements of  Pimenov algebras 
with a number of generators no more than four.

As for Pimenov algebras  with more than four generators, 
it is easy to see that if an element $d$ is prime in an algebra $P_m(\i)$
then it is prime also in any algebra $P_n(\i)$, where $n>m$.
Hence taking into account Theorem \ref{t3}, it follows that 
there exist prime elements of the degree $2$ in algebras $P_n(\i)$, $n>4$.
It would be interesting to consider the general case and find out 
what are the other possible degrees of simple elements of Pimenov algebras.

The obtained results may be useful to development 
of factorization algorithms  in  Pimenov algebras.

The author thanks N.A.Gromov for valuable discussions and his useful comments. 

The study is supported by Program of UD RAS, project No 12-P-1-1013.

\end{document}